\documentclass[11pt,reqno]{amsart}
\usepackage{amsmath,enumerate,etoolbox,booktabs,tikz,float}
\usepackage[section]{placeins}
\usepackage{txfonts}
\RequirePackage[scaled=0.92]{helvet}

\RequirePackage[paperwidth=155mm,paperheight=235mm,tmargin=20mm,lmargin=19mm,rmargin=19mm,bmargin=16mm,asymmetric]{geometry}
\patchcmd{\section}{\scshape}{\bfseries}{}{}
\makeatletter
\renewcommand{\@secnumfont}{\bfseries}
\renewcommand\@biblabel[1]{#1.}
\makeatother
\frenchspacing
\renewcommand{\chi}{\chiup}
\newtheorem{theorem}{Theorem}
\newtheorem*{theorem*}{Theorem}

\newtheorem*{proposition*}{Proposition}

\newtheorem*{lemma*}{Lemma}

\newtheorem*{corollary*}{Corollary}
\newtheorem{conjecture}{Conjecture}
\newtheorem*{conjecture*}{Conjecture}
\theoremstyle{definition}

\newtheorem*{question*}{Question}
\newtheorem*{definition*}{Definition}

\newtheorem*{remark*}{Remark}

\newtheorem*{example*}{Example}
\title[On parity and characters]{Note on parity and the irreducible characters
  of the symmetric group}
\author[A.~R.~Miller]{Alexander~R.~Miller}
\address{\raisebox{-20pt}[0pt][0pt]{Fakult\"at f\"ur Mathematik, Universit\"at Wien, Vienna, Austria}}
\thanks{The author was supported
  in part by the Austrian Science Foundation FWF Special Research Program
``Algorithmic and Enumerative Combinatorics'' (SFB F50).}
\begin{document}
\maketitle
\parindent1em
\section*{Introduction}\label{Introduction:Section}
\noindent
The object of this short note is to prove a theorem and present a conjecture
for the number of even entries in the character table of the symmetric group~$S_n$.
\begin{theorem}\label{Main:Theorem}
  The number of even entries in the character table of $S_n$ is even.
\end{theorem}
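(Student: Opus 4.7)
The plan is to exploit two symmetries in order to track parities. First I would consider the involution $(\lambda,\mu)\mapsto(\lambda',\mu)$ on the character table; since $\chi^{\lambda'}=\chi^{\lambda}\otimes\mathrm{sgn}$, this gives $\chi^{\lambda'}(\mu)=\mathrm{sgn}(\mu)\,\chi^{\lambda}(\mu)$ and preserves the parity of every entry. Pairs $\{\lambda,\lambda'\}$ with $\lambda\neq\lambda'$ therefore contribute an even number of even entries, reducing the problem to showing that the number of even entries in rows indexed by self-conjugate $\lambda$ is even.

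For self-conjugate $\lambda$, the identity $\chi^{\lambda}=\chi^{\lambda}\cdot\mathrm{sgn}$ forces $\chi^{\lambda}(\mu)=0$ whenever $\mathrm{sgn}(\mu)=-1$. If instead $\mathrm{sgn}(\mu)=+1$ but the cycle type of $\mu$ does not consist of distinct odd parts, then the class $C_{\mu}$ does not split upon restriction to $A_{n}$, and the standard decomposition $\chi^{\lambda}|_{A_{n}}=\chi^{\lambda}_{+}+\chi^{\lambda}_{-}$ -- with $\chi^{\lambda}_{\pm}$ conjugate under $S_{n}/A_{n}$ and therefore equal on non-splitting classes -- yields $\chi^{\lambda}(\mu)=2\chi^{\lambda}_{+}(\mu)$, which is even. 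The only potentially odd entries in a self-conjugate row thus lie in the $c\times c$ submatrix $M$ whose rows are indexed by the $c$ self-conjugate partitions of $n$ and columns by the $c$ partitions of $n$ into distinct odd parts (equinumerous by Frobenius's diagonal-hook bijection).

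Writing the number of even entries in self-conjugate rows as $c(t-c)+E$, where $t=p(n)$ and $E$ is the number of even entries of $M$, the first summand is even because $t-c$, the number of non-self-conjugate partitions of $n$, is itself even under the same conjugation pairing. To handle $E$ I would invoke Frobenius--Schur: every irreducible character of $S_{n}$ has indicator $+1$, so $\sum_{\lambda}\chi^{\lambda}(\mu)$ equals the number of square roots in $S_{n}$ of a fixed element of $C_{\mu}$, and a short centralizer computation shows this number is $1$ when $\mu$ has distinct odd parts (the centralizer of $g_{\mu}$ is generated by the individual cycles of $g_{\mu}$, and $h^{2}=g_{\mu}$ is uniquely solvable cycle by cycle since each $\mu_{i}$ is odd). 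Subtracting the non-self-conjugate contributions, which pair evenly because $\mathrm{sgn}(\mu)=+1$, leaves $\sum_{\lambda=\lambda'}\chi^{\lambda}(\mu)\equiv 1\pmod 2$; summing over the $c$ columns of $M$ shows that $M$ has $\equiv c$ odd entries, hence $\equiv c^{2}-c\equiv 0\pmod 2$ even entries.

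The main obstacle I foresee is the second-paragraph reduction: one has to know cleanly that $\chi^{\lambda}(\mu)$ is automatically even for self-conjugate $\lambda$ as soon as $\mu$ fails to be a partition into distinct odd parts, which rests on the standard but nontrivial theory of how irreducible characters of $S_{n}$ and its conjugacy classes split upon restriction to the alternating group. Once that reduction is installed, the Frobenius--Schur step becomes a one-line square-root count, and the parity bookkeeping that finishes the proof is routine.
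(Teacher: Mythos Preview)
Your argument is correct, but it follows a markedly different route from the paper's.  The paper never isolates individual rows or invokes the representation theory of $A_n$; it works globally via the second orthogonality relation.  Writing $O_n$ for the number of odd entries, one has
\[
O_n\equiv\sum_{g}\sum_{\chi}\chi(g)\equiv\sum_{g}\sum_{\chi}\chi(g)^2\pmod 2,
\]
and column orthogonality gives $\sum_{\chi}\chi(g)^2=|C_{S_n}(g)|=\prod_i i^{m_i}m_i!$.  This centralizer order is odd precisely when the cycle type has distinct odd parts, so $O_n\equiv OD_n=SC_n\pmod 2$, and then $E_n=p_n^2-O_n\equiv p_n-SC_n\equiv 0\pmod 2$, the final step being exactly your conjugation-pairing observation.

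What each approach buys: the paper's proof is essentially a two-line calculation once one thinks to square the entries, using nothing beyond the centralizer formula for $S_n$.  Your route is heavier---it needs the splitting criterion for $S_n$-classes in $A_n$, Clifford theory for $\chi^\lambda|_{A_n}$ when $\lambda=\lambda'$, and the Frobenius--Schur square-root count---but in exchange it pins down more structure: you locate all potentially odd entries inside the $c\times c$ block indexed by self-conjugate $\lambda$ and distinct-odd $\mu$, and show each column of that block carries an odd number of odd entries.  One point worth making explicit in your write-up is why $\chi^\lambda_{+}(\mu)$ is an \emph{integer} on a non-splitting class (it is a rational algebraic integer), since that is what makes $\chi^\lambda(\mu)=2\chi^\lambda_{+}(\mu)$ genuinely even rather than merely twice a half-integer.
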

\begin{conjecture}\label{Main:Conjecture}
  The proportion of the character table of $S_n$ covered by even entries
  tends to $1$ as $n\to \infty$.
\end{conjecture}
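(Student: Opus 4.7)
The plan is to bound the number of \emph{odd} entries in the character table by $o(p(n)^2)$; since $0$ is even, this is equivalent to the stated conjecture. I would attack it column by column, using the $2$-core/$2$-quotient decomposition of $\lambda$ together with the Murnaghan--Nakayama rule reduced modulo $2$.

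The first step is a parity reduction. Writing $\mu = \alpha \sqcup 2\beta$ with $\alpha$ the odd parts of $\mu$, iterated removal of even rim hooks should yield a congruence modulo $2$ expressing $\chi^\lambda(\mu)$ as a product of characters built from the $2$-core of $\lambda$ evaluated against $\alpha$ and from the two components $(\lambda^{(0)},\lambda^{(1)})$ of the $2$-quotient of $\lambda$ evaluated against splittings of $\beta$. The degenerate case $\mu = (1^n)$ is Macdonald's classical description of odd-degree characters of $S_n$, whose count is already known to be of far smaller order than $p(n) \sim (4n\sqrt 3)^{-1} e^{\pi\sqrt{2n/3}}$, and serves as a model for what the parity reduction must give column by column.

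The second step is asymptotic counting. Under the reduction each odd entry is encoded by a triple $(\operatorname{core}_2(\lambda),\lambda^{(0)},\lambda^{(1)})$ rigidly matched to the $2$-adic data of $\mu$. Since $|\lambda^{(0)}| + |\lambda^{(1)}| \leq n/2$ and the $2$-core is typically small, the total count of compatible configurations, summed over $(\lambda,\mu)$, ought to be of order $p(n)^{2-\delta}$ via Hardy--Ramanujan asymptotics and standard convolution estimates.

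The principal obstacle is making the global sum genuinely tight: per-column bounds are routine, but there are $p(n)$ columns, and a naive union bound loses a factor. A cleaner but strictly stronger route would be to show that most entries of the character table vanish outright, which would imply the conjecture at once since $0$ is even; that density-of-zeros statement is itself an open problem and appears to demand deeper random-partition asymptotics, such as control over when the Murnaghan--Nakayama recursion typically terminates in zero for a Plancherel- or uniform-random $\lambda$ paired with a uniform-random $\mu$. Either way, I expect the mod-$2$ reduction to be largely mechanical, while the delicate analytic step is precisely the passage from per-column estimates to a global bound sharp enough to beat $p(n)^2$.
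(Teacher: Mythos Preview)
The paper offers no proof of this statement: it is explicitly labeled a conjecture and is supported only by the numerical data in Table~\ref{EO:Table} and Figure~\ref{Figure:Plot}. There is therefore nothing in the paper to compare your argument against; the only question is whether your outline constitutes, or can be completed to, a proof.

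It does not, and you essentially say so yourself. The parity reduction in your first step is reasonable in spirit and has a genuine precedent in the $\mu=(1^n)$ case (Macdonald's count of odd-degree irreducibles), but you state the general factorization only as something that ``should'' hold, without a precise formulation or argument; even in the favorable $2$-abacus picture, where rim hooks of even length move a bead along a single runner and rim hooks of odd length move a bead between runners, turning this into the clean product-over-splittings congruence you need is work you have not supplied. The decisive gap, however, is your second step. You assert that the total count of compatible configurations ``ought to be of order $p(n)^{2-\delta}$'' but give no mechanism for the saving, and you then correctly observe that the natural column-by-column union bound loses exactly the factor required, while the alternative route through density of zeros is itself an open problem. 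That obstacle is real: the conjecture was eventually settled after the paper appeared, but not by the purely combinatorial reduction you sketch.
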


Theorem~\ref{Main:Theorem} is proved in Section~\ref{Theorem:Section}.
Conjecture~\ref{Main:Conjecture} is discussed in Section~\ref{Conjectures:Section}.
To support Conjecture~\ref{Main:Conjecture} we write down
in Table~\ref{EO:Table} 
the number of even entries and odd entries in the character table of $S_n$ for $1\leq n\leq 76$. 
See Figure~\ref{Figure:Plot}.
Another table (Table~\ref{Other:Table}) in Section~\ref{Conjectures:Section}  
suggest a more general phenomenon.
\begin{conjecture}\label{General:Conjecture}
  The proportion of the character table of $S_n$ covered by entries divisible by
  a given prime number $p$ tends to $1$ as $n\to\infty$.
\end{conjecture}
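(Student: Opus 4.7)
The plan is to deduce Conjecture~\ref{General:Conjecture} from the stronger claim that the proportion of \emph{zero} entries in the character table of $S_n$ tends to $1$; since $0$ is divisible by every prime, this sharper claim would yield Conjecture~\ref{General:Conjecture} uniformly in $p$, and would simultaneously sharpen Conjecture~\ref{Main:Conjecture}. Writing $p(n)$ for the number of partitions of $n$, the goal becomes to show $\#\{(\lambda,\mu):\chi^\lambda(\mu)\neq 0\}=o(p(n)^2)$.

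The principal source of forced zeros I would exploit is the Murnaghan--Nakayama rule: $\chi^\lambda(\mu)=0$ as soon as some part $\mu_i$ of $\mu$ admits no rim hook of size $\mu_i$ in $\lambda$, and since the parts of $\mu$ can be processed in any order, this condition may be checked against whichever $\mu_i$ is convenient. Reformulated via the $p$-abacus, this criterion becomes concrete: for example, $\chi^\lambda(\mu)=0$ whenever $\mu$ has enough parts divisible by $p$ to exhaust the $p$-weight $w(\lambda)=(n-|\mathrm{core}_p(\lambda)|)/p$, since each such part must be absorbed by a bead-jump on a runner of the $p$-abacus of $\lambda$. Layering several such obstructions, arising both from $p$-divisible and from $p$-coprime parts of $\mu$, should produce a rich family of forced zeros.

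The second ingredient is an asymptotic count, via the Hardy--Ramanujan generating-function machinery refined to $p$-cores and $p$-quotients. On the $\lambda$-side, the number of $\lambda\vdash n$ with atypically small $p$-weight should be $o(p(n))$, the relevant generating function being $\prod_i(1-q^{pi})/\prod_i(1-q^i)$ for the $p$-core statistic; on the $\mu$-side, a dual estimate should show that the set of $\mu\vdash n$ whose parts cannot trigger any of the above obstructions is also of size $o(p(n))$. A product bound then isolates a set of pairs $(\lambda,\mu)$ of size $o(p(n)^2)$ outside which a Murnaghan--Nakayama obstruction fires and forces $\chi^\lambda(\mu)=0$.

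I expect the main obstacle to be the residual columns, and above all the identity column $\mu=(1^n)$, where the rim-hook obstructions are vacuous and $\chi^\lambda(1^n)=f^\lambda$ is the degree. For this single column the classical Macdonald hook-length formula already gives $\#\{\lambda:p\nmid f^\lambda\}=o(p(n))$, so its contribution is $o(p(n)^2)$. The genuinely delicate case is a class $\mu$ with many parts all coprime to $p$: here neither the rim-hook obstruction nor the hook-length formula applies directly, and one would have to invoke $p$-modular representation theory -- $p$-block theory, the structure of the decomposition matrices, and the $p'$-character correspondences known for $S_n$ -- to bound the nonzero values of $\chi^\lambda$ on such columns. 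This last step is where I foresee the real difficulty.
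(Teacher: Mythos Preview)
The paper contains no proof of this statement: it is stated as a conjecture and supported only by the numerical data in Table~\ref{Other:Table}. There is therefore nothing in the paper to compare your attempt against.

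As for the proposal itself, it is not a proof but a research programme, and one that reduces the conjecture to a strictly harder open problem. Your plan is to show that the proportion of \emph{zero} entries in the character table of $S_n$ tends to $1$. That assertion is precisely the question raised in the paper you see cited as~\cite{M}, and it remains open; indeed, the present paper's Conjecture~\ref{Main:Conjecture} and Conjecture~\ref{General:Conjecture} are deliberately weaker statements formulated because the zero-density question appears out of reach. So even before examining the details, your strategy trades one open conjecture for a harder one.

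Within the outline there are also concrete gaps beyond the one you flag at the end. The Murnaghan--Nakayama obstruction you invoke---that $\chi^\lambda(\mu)=0$ once the parts of $\mu$ divisible by $p$ exceed the $p$-weight $w(\lambda)$---fires only for pairs $(\lambda,\mu)$ in which either $\lambda$ has atypically small $p$-weight or $\mu$ has atypically many parts divisible by $p$; standard partition asymptotics show that a typical $\lambda\vdash n$ has $p$-weight $n/p-O(\sqrt{n})$, while a typical $\mu\vdash n$ has only $O(\sqrt{n})$ parts in any fixed residue class, so the obstruction is vacuous on a $(1-o(1))$ fraction of pairs rather than the other way around. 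The ``layering'' of further obstructions you allude to is not specified, and the columns you single out as problematic (those $\mu$ with all parts coprime to $p$) are not an isolated exceptional set but in fact have positive density among all $\mu\vdash n$. In short, the difficulties you identify in your final paragraph are not a residual case but the heart of the matter.
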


\begin{figure}[h]
  \centering
  \begin{tikzpicture}[scale=4]
    \draw [thick] (0,0-.004)--(0,1)--(2+.01,1)--(2+.01,0-.004)--cycle;
    \foreach \i in {2,4,6,8} \draw [thick] (0,\i/10) -- (.03,\i/10);
    \foreach \i in {2,4,6,8} \node[rotate=90] at (0-.05,\i/10) {\tiny\bf 0.\i};
    \node[rotate=90] at (0-.05,1) {\tiny\bf 1.0};
    \node[rotate=90] at (0-.05,0-.002) {\tiny\bf 0.0};
    \foreach \i in {10,20,...,76} \draw [thick] (2*\i/76,0-.004)--(2*\i/76,.03-.004);
    \foreach \i in {0,10,...,76} \node at (2*\i/76,0-.004-.05) {\tiny\bf \i};
  \foreach \x/\y in
  {
1/ 0.000000,
2/ 0.000000,
3/ 0.222222,
4/ 0.240000,
5/ 0.326530,
6/ 0.363636,
7/ 0.400000,
8/ 0.549587,
9/ 0.564445,
10/ 0.547619,
11/ 0.581633,
12/ 0.598414,
13/ 0.597392,
14/ 0.635720,
15/ 0.621578,
16/ 0.702611,
17/ 0.701470,
18/ 0.695632,
19/ 0.711920,
20/ 0.729827,
21/ 0.727850,
22/ 0.756931,
23/ 0.755566,
24/ 0.761004,
25/ 0.766302,
26/ 0.776543,
27/ 0.773131,
28/ 0.791316,
29/ 0.785669,
30/ 0.791326,
31/ 0.790687,
32/ 0.808667,
33/ 0.803730,
34/ 0.810735,
35/ 0.811763,
36/ 0.815064,
37/ 0.815565,
38/ 0.824422,
39/ 0.822188,
40/ 0.827024,
41/ 0.827150,
42/ 0.832165,
43/ 0.830679,
44/ 0.837467,
45/ 0.835640,
46/ 0.839228,
47/ 0.839611,
48/ 0.844193,
49/ 0.843245,
50/ 0.847102,
51/ 0.847389,
52/ 0.849612,
53/ 0.850410,
54/ 0.853485,
55/ 0.853151,
56/ 0.855968,
57/ 0.856350,
58/ 0.858603,
59/ 0.858999,
60/ 0.860868,
61/ 0.860969,
62/ 0.862982,
63/ 0.863306,
64/ 0.865487,
65/ 0.865570,
66/ 0.866821,
67/ 0.867150,
68/ 0.869040,
69/ 0.869127,
70/ 0.870139,
71/ 0.870653,
72/ 0.871719,
73/ 0.871960,
74/ 0.873174,
75/ 0.873273,
76/ 0.874223
  }
  \draw [fill] ({(2*\x)/76}, \y) circle (0.005);
\end{tikzpicture}
\caption{Proportion
  of the character table of the symmetric group $S_n$
  covered by even entries for $1\leq n\leq 76$.}
\label{Figure:Plot}
\end{figure}
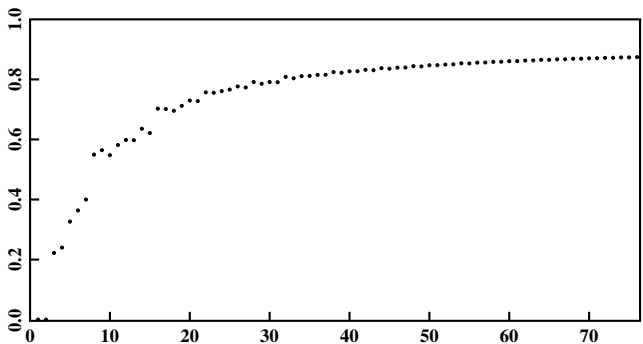

\section{Proof of Theorem~\ref{Main:Theorem}}\label{Theorem:Section}
\noindent
Let $p_n$ be the number of partitions of $n$.
Here a partition of $n$ is a sequence of 
positive integers $\lambda=(\lambda_1,\lambda_2,\ldots,\lambda_\ell)$
such that $\lambda_1\geq \lambda_2\geq \ldots\geq \lambda_\ell$ and
$\lambda_1+\lambda_2+\ldots+\lambda_\ell=n$.
The conjugate of $\lambda$ is the partition $\lambda'$ whose parts are 
$\lambda_i'=\#\{j : i\leq \lambda_j\}$ for $1\leq i\leq \lambda_1$.
Conjugation is the involution ${\lambda\mapsto \lambda'}$.
The fixed points of this involution are  self-conjugate partitions.
Self-conjugate partitions $\lambda$ of $n$ are in one-to-one correspondence
with  partitions $\mu$ of $n$ into odd distinct parts
via 
$\lambda\mapsto \mu$
where $\mu_i=2(\lambda_i-i)+1$ for $i$ such that $1\leq i\leq \lambda_i$. 
\begin{proof}[Proof of Theorem~\ref{Main:Theorem}]
  Let $O_n$ be the number of odd entries in the character table of $S_n$.
  Then
  \begin{equation}\label{Odd:Eq}
    O_n\equiv \sum_g\sum_\chi \chi(g)\equiv \sum_g\sum_\chi \chi(g)^2\pmod{2}
  \end{equation}
  where the two outer sums run over a set of representatives $g$ for the conjugacy classes
  and the two inner sums run over the irreducible characters $\chi$.
  Moreover one of the orthogonality relations \cite{F}  tells us that 
  \begin{equation}\label{Orth:Eq}
    \sum_{\chi}\chi(g)^2=1^{m_1}m_1!2^{m_2}m_2!\ldots n^{m_n}m_n!
  \end{equation}
  for $m_p$ the number of cycles of period $p$ in the cycle decomposition of~$g$.
  Together \eqref{Odd:Eq} and \eqref{Orth:Eq} imply
  \begin{equation}
    O_n\equiv OD_n \pmod{2}
  \end{equation}
  where $OD_n$ is the number of partitions of $n$ into odd distinct parts. 
  Let $SC_n$ be the number of self-conjugate partitions of $n$ so that 
  $SC_n=OD_n$ and hence
  \begin{equation}\label{O:SC:Eq}
    O_n\equiv SC_n\pmod{2}.
  \end{equation}
  
  Let $E_n$ be the number of even entries in the caracter table of $S_n$. Then 
  \begin{equation}\label{O:E:Eq}
    O_n+E_n=p_n^2\equiv p_n\pmod{2}.
  \end{equation}
  Together \eqref{O:SC:Eq} and \eqref{O:E:Eq} imply
  \begin{equation}
    E_n\equiv p_n-SC_n\pmod{2}.
  \end{equation}
  But $p_n-SC_n\equiv 0\pmod{2}$ because conjugation restricts to a fixed-point-free involution on the
  set of non-self-conjugate partitions of $n$.
\end{proof}

\section{Remarks and some tables}\label{Conjectures:Section}
\noindent
This section contains some tables and remarks.
The main object is Table~\ref{EO:Table}
for the number of even entries in the character table of $S_n$.

\subsection{Remarks}\label{Remarks:Section}
\noindent
Let $\chi(\mu)$ be short for the constant value $\chi(g)$
of the irreducible character $\chi$ of $S_n$  on the
class consisting of all permutations $g\in S_n$
for which the periods of the disjoint 
cycles form the partition $\mu$.
\subsubsection{}
In terms of the probability that 
an entry $\chi(\mu)$ is even  when chosen uniformly at random
from the character table of the symmetric group $S_n$ Conjecture~\ref{Main:Conjecture} says
\begin{equation}
  {\rm Prob}(\,\chi(\mu)\text{ is even}\,)\to 1\text{ as }n\to\infty.
\end{equation}
This parity bias becomes even more striking
when compared with the distribution of signs in the character table of $S_n$
(cf.~\cite[Question~3]{M}). See Figure~\ref{Figure:PM:Plot} and 
Table~\ref{PMZ:Table}.
\begin{conjecture}
  ${\rm Prob}(\,\chi(\mu)>0 \mid \chi(\mu)\neq 0\,)\to 1/2$ as $n\to\infty$.
\end{conjecture}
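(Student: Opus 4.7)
My proposed approach to this conjecture begins from the identity
\[
 \chi^{\lambda'}(\mu) \;=\; \varepsilon(\mu)\,\chi^{\lambda}(\mu),
\]
where $\varepsilon(\mu)=(-1)^{n-\ell(\mu)}$ is the sign of any permutation of cycle type $\mu$; this is immediate from $\chi^{\lambda'}=\chi^{\lambda}\otimes\mathrm{sgn}$. Call a column of the character table indexed by $\mu$ \emph{odd} if $\varepsilon(\mu)=-1$ and \emph{even} otherwise.

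The first step is to dispose of the odd columns. For such a $\mu$, conjugation $\lambda\mapsto\lambda'$ is an involution on partitions of $n$ that sends $\chi^{\lambda}(\mu)$ to $-\chi^{\lambda}(\mu)$. Hence every self-conjugate $\lambda$ contributes a zero in column $\mu$, and the remaining $\lambda$ pair up into matched entries $\{+v,-v\}$. Thus in every odd column the positive and negative entries are in exact bijection. Writing $N_\pm^{o}$ and $N_\pm^{e}$ for the number of $\pm$ entries in odd and even columns respectively, we have $N_+^{o}=N_-^{o}$ for free, and since $N_+-N_- = N_+^{e}-N_-^{e}$, the conjecture reduces to
\[
 \frac{|N_+^{e}-N_-^{e}|}{N_+ + N_-}\;\longrightarrow\;0.
\]

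The second step is then to establish asymptotic sign balance inside the even columns. Some book-keeping is convenient here: standard statistics of random partitions show that $\varepsilon(\mu)$ is not concentrated on either value for typical $\mu$, and combined with known results on the vanishing of $\chi^{\lambda}(\mu)$ this should give that odd columns contain a positive proportion of all nonzero entries of the table, so the denominator $N_++N_-$ does not collapse. The genuine obstacle is the numerator: on even columns there is no symmetry of the form $\chi^{\lambda'}(\mu)=-\chi^{\lambda}(\mu)$ to force cancellation, so a genuinely new idea is needed. Plausible routes include (a) searching for a finer sign-reversing involution on pairs $(\lambda,\mu)$ with $\varepsilon(\mu)=+1$, built for instance from the Murnaghan--Nakayama rule applied to restricted families of cycle types, and (b) proving an equidistribution statement for the pair $\bigl(|\chi^{\lambda}(\mu)|,\operatorname{sgn}\chi^{\lambda}(\mu)\bigr)$ of the sort that would also speak to Conjectures~\ref{Main:Conjecture} and~\ref{General:Conjecture}. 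I expect this second step to be of difficulty comparable to those parity-bias conjectures and to lie well beyond the elementary conjugation trick that disposes of Theorem~\ref{Main:Theorem} and of the odd columns here.
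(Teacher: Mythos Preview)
There is nothing to compare against: the paper does \emph{not} prove this statement. It is listed as a conjecture, supported only by the numerical data in Figure~\ref{Figure:PM:Plot} and Table~\ref{PMZ:Table}, with no argument offered beyond that. So your proposal is not being measured against a proof but against an open problem.

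Your first step is correct and is the natural observation here: the identity $\chi^{\lambda'}(\mu)=\varepsilon(\mu)\chi^{\lambda}(\mu)$ forces, on each column with $\varepsilon(\mu)=-1$, the self-conjugate rows to vanish and the remaining rows to pair off with opposite signs, giving $N_+^{o}=N_-^{o}$ exactly. The reduction $N_+-N_-=N_+^{e}-N_-^{e}$ is therefore valid.

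Your second step, however, is not a step but an acknowledgement that the problem is open. You yourself say ``a genuinely new idea is needed'' and list speculative directions. That is an honest assessment, but it means your proposal is a partial reduction, not a proof. Two remarks on the reduction itself: (i) the claim that ``odd columns contain a positive proportion of all nonzero entries'' only controls the denominator $N_++N_-$ from below; it says nothing about the numerator $|N_+^{e}-N_-^{e}|$, which is the entire difficulty; (ii) on even columns the same identity gives $\chi^{\lambda'}(\mu)=+\chi^{\lambda}(\mu)$, so conjugation actively \emph{preserves} signs there and cannot be leveraged for cancellation, as you correctly note. The conjecture remains open in the paper and in your write-up.
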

\begin{figure}[hbt]
  \centering
  \begin{tikzpicture}[scale=4]
    \draw [thick] (0,0-.004)--(0,1)--(2+.01,1)--(2+.01,0-.004)--cycle;
    \foreach \i in {0.25,0.5,0.75} \draw [thick] (0,\i) -- (.03,\i);
    \foreach \i in {0.25,0.5,0.75} \node[rotate=90] at (0-.05,\i) {\tiny\bf \i};
    \node[rotate=90] at (0-.05,1) {\tiny\bf 1.0};
    \node[rotate=90] at (0-.05,0-.002) {\tiny\bf 0.0};
    \foreach \i in {5,10,...,38} \draw [thick] (4*\i/76,0-.004)--(4*\i/76,.03-.004);
    \foreach \i in {0,5,...,38} \node at (4*\i/76,0-.004-.05) {\tiny\bf \i};
    \foreach \x/\y in
{
1 / 1.00000,
2 / 0.750000,
3 / 0.750000,
4 / 0.666667,
5 / 0.666667,
6 / 0.630435,
7 / 0.576470,
8 / 0.586102,
9 / 0.580101,
10 / 0.554421,
11 / 0.550047,
12 / 0.546094,
13 / 0.533494,
14 / 0.535560,
15 / 0.524393,
16 / 0.521590,
17 / 0.520837,
18 / 0.518988,
19 / 0.513955,
20 / 0.512761,
21 / 0.511580,
22 / 0.510413,
23 / 0.508808,
24 / 0.507993,
25 / 0.506464,
26 / 0.506354,
27 / 0.505301,
28 / 0.504786,
29 / 0.504102,
30 / 0.503858,
31 / 0.503163,
32 / 0.503088,
33 / 0.502617,
34 / 0.502368,
35 / 0.502168,
36 / 0.501929,
37 / 0.501712,
38 / 0.501601
}
\draw [fill] ({(4*\x)/76}, \y) circle (0.0075);
\foreach \x/\y in
{
1 / 0.000000,
2 / 0.250000,
3 / 0.250000,
4 / 0.333333,
5 / 0.333333,
6 / 0.369565,
7 / 0.423530,
8 / 0.413898,
9 / 0.419899,
10 / 0.445578,
11 / 0.449953,
12 / 0.453907,
13 / 0.466506,
14 / 0.464440,
15 / 0.475607,
16 / 0.478410,
17 / 0.479163,
18 / 0.481013,
19 / 0.486044,
20 / 0.487239,
21 / 0.488420,
22 / 0.489587,
23 / 0.491192,
24 / 0.492007,
25 / 0.493536,
26 / 0.493646,
27 / 0.494699,
28 / 0.495214,
29 / 0.495897,
30 / 0.496142,
31 / 0.496837,
32 / 0.496912,
33 / 0.497383,
34 / 0.497632,
35 / 0.497831,
36 / 0.498070,
37 / 0.498290,
38 / 0.498398
}
\draw  ({(4*\x)/76}, \y) circle (0.0075);
\draw [thick] (0,0.5)--(2+.01,0.5);
\end{tikzpicture}
\caption[]{The plot 
  {\tikz[baseline=-0.5ex]\draw[fill=black,radius=1.5pt] (0,0) circle ;}
  for ${\rm Prob}(\chi(\mu)>0 \mid \chi(\mu)\neq 0)$ and 
  the plot
  {\tikz[baseline=-0.5ex]\draw[fill=white,radius=1.5pt] (0,0) circle ;}
  for ${\rm Prob}(\chi(\mu)<0 \mid \chi(\mu)\neq 0)$ where $1\leq n\leq 38$.%
}\label{Figure:PM:Plot}
\end{figure}

\subsubsection{}
Conjecture~\ref{General:Conjecture} implies that for any integer number $d$ one has
\begin{equation}
  {\rm Prob}(\,\chi(\mu)\equiv 0\,\, ({\rm mod}\ d)\,)\to 1\text{ as }n\to\infty.
\end{equation}
Figure~\ref{Figure:Plot} suggests that there is a sharper statement. See for example 
Figure~\ref{Figure:Comp}. 
 \begin{figure}[H]
  \centering
  \begin{tikzpicture}[scale=4]
    \draw [thick] (0,0-.004)--(0,1)--(2+.01,1)--(2+.01,0-.004)--cycle;
    \foreach \i in {2,4,6,8} \draw [thick] (0,\i/10) -- (.03,\i/10);
    \foreach \i in {2,4,6,8} \node[rotate=90] at (0-.05,\i/10) {\tiny\bf 0.\i};
    \node[rotate=90] at (0-.05,1) {\tiny\bf 1.0};
    \node[rotate=90] at (0-.05,0-.002) {\tiny\bf 0.0};
    \foreach \i in {10,20,...,76} \draw [thick] (2*\i/76,0-.004)--(2*\i/76,.03-.004);
    \foreach \i in {0,10,...,76} \node at (2*\i/76,0-.004-.05) {\tiny\bf \i};
  \foreach \x/\y in
  {
2/ 0.000000,
3/ 0.222222,
4/ 0.240000,
5/ 0.326530,
6/ 0.363636,
7/ 0.400000,
8/ 0.549587,
9/ 0.564445,
10/ 0.547619,
11/ 0.581633,
12/ 0.598414,
13/ 0.597392,
14/ 0.635720,
15/ 0.621578,
16/ 0.702611,
17/ 0.701470,
18/ 0.695632,
19/ 0.711920,
20/ 0.729827,
21/ 0.727850,
22/ 0.756931,
23/ 0.755566,
24/ 0.761004,
25/ 0.766302,
26/ 0.776543,
27/ 0.773131,
28/ 0.791316,
29/ 0.785669,
30/ 0.791326,
31/ 0.790687,
32/ 0.808667,
33/ 0.803730,
34/ 0.810735,
35/ 0.811763,
36/ 0.815064,
37/ 0.815565,
38/ 0.824422,
39/ 0.822188,
40/ 0.827024,
41/ 0.827150,
42/ 0.832165,
43/ 0.830679,
44/ 0.837467,
45/ 0.835640,
46/ 0.839228,
47/ 0.839611,
48/ 0.844193,
49/ 0.843245,
50/ 0.847102,
51/ 0.847389,
52/ 0.849612,
53/ 0.850410,
54/ 0.853485,
55/ 0.853151,
56/ 0.855968,
57/ 0.856350,
58/ 0.858603,
59/ 0.858999,
60/ 0.860868,
61/ 0.860969,
62/ 0.862982,
63/ 0.863306,
64/ 0.865487,
65/ 0.865570,
66/ 0.866821,
67/ 0.867150,
68/ 0.869040,
69/ 0.869127,
70/ 0.870139,
71/ 0.870653,
72/ 0.871719,
73/ 0.871960,
74/ 0.873174,
75/ 0.873273,
76/ 0.874223
  }
  \draw [fill] ({(2*\x)/76}, \y) circle (0.005);
\foreach \k in {2.0,2.2,...,75.8}
   \draw [thick] ({(2*\k)/76},{(2/3.1459)*rad(atan((\k/2)^(1/2)-1))}) --
     ({(2*(\k+0.2))/76},{(2/3.1459)*rad(atan(((\k+0.2)/2)^(1/2)-1))});
\end{tikzpicture}
\caption{The proportion of the character table of $S_n$ covered by
  even entries for $2\leq n\leq 76$ and the graph of 
$2\pi^{-1}\arctan(\sqrt{n/2}-1)$ for $2\leq n\leq 76$.
}\label{Figure:Comp}
\end{figure}
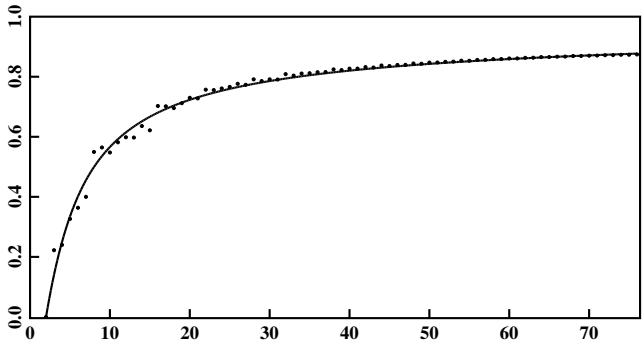
\newpage

\subsection{Tables}\label{Table:Section}\ 
\begin{table}[h]
  \caption{Number of even entries and number of odd entries
    in the character table of $S_n$ for $1\leq n\leq 76$.}\label{EO:Table}
  \scriptsize
  \centering
\begin{tabular}{lll}
  \toprule
  $n$ &  {no.~of evens} & {no.~of odds} \\
  \midrule
$1$ &  $0$ & $1$ \\
$2$ &  $0$ & $4$ \\
$3$ &  $2$ & $7$ \\
$4$ &  $6$ & $19$ \\
$5$ &  $16$ & $33$\\
$6$ &  $44$ & $77$\\
$7$ &  $90$ & $135$ \\
$8$ &  $266$ & $218$\\
$9$ &  $508$ & $392$\\
$10$ &  $966$ & $798$ \\
$11$ &  $1824$ & $1312$ \\
$12$ &  $3548$ & $2381$ \\
$13$ &  $6094$ & $4107$\\
$14$ &  $11586$ & $6639$\\
$15$ &  $19254$ & $11722$\\
$16$ &  $37492$ & $15869$ \\
$17$ &  $61876$ & $26333$ \\
$18$ &  $103110$ & $45115$\\
$19$ &  $170932$ & $69168$ \\
$20$ &  $286916$ & $106213$ \\
$21$ &  $456554$ & $170710$ \\
$22$ &  $759962$ & $244042$ \\
$23$ &  $1190034$ & $384991$\\
$24$ &  $1887766$ & $592859$ \\
$25$ &  $2937820$ & $895944$ \\
$26$ &  $4608084$ & $1326012$ \\
$27$ &  $7004646$ & $2055454$ \\
$28$ &  $10938762$ & $2884762$ \\
$29$ &  $16372732$ & $4466493$\\
$30$ &  $24851432$ & $6553384$ \\
$31$ &  $37014368$ & $9798596$ \\
$32$ &  $56368810$ & $13336991$ \\
$33$ &  $82688102$ & $20192347$ \\
$34$ &  $122855526$ & $28680574$ \\
$35$ &  $179808396$ & $41695293$ \\
$36$ &  $263406424$ & $59766105$ \\
$37$ &  $381814902$ & $86344867$ \\
$38$ &  $557951490$ & $118828735$ \\
\bottomrule
\end{tabular}\quad
\begin{tabular}{lll}
\toprule
$n$ &  {no.~of evens} & {no.~of odds} \\
\midrule
$39$ &  $799580980$ & $172923245$ \\
$40$ &  $1152977342$ & $241148902$ \\
$41$ &  $1644080076$ & $343563813$ \\
$42$ &  $2352923494$ & $474550782$ \\
$43$ &  $3324344208$ & $677609913$ \\
$44$ &  $4732761850$ & $918518775$ \\
$45$ &  $6639049122$ & $1305820834$\\
$46$ &  $9351080036$ & $1791411328$ \\
$47$ &  $13067332410$ & $2496228106$\\
$48$ &  $18309958344$ & $3379378185$\\
$49$ &  $25390864566$ & $4720061059$\\
$50$ &  $35331180090$ & $6377078986$\\
$51$ &  $48786461562$ & $8786181687$\\
$52$ &  $67367826002$ & $11924538919$\\
$53$ &  $92571070272$ & $16283394489$\\
$54$ &  $127268025536$ & $21847658489$ \\
$55$ &  $173744388742$ & $29905639434$\\
$56$ &  $237567368138$ & $39975105191$\\
$57$ &  $323002974632$ & $54182161084$\\
$58$ &  $439208932802$ & $72330715598$\\
$59$ &  $594363393060$ & $97561119340$\\
$60$ &  $804101537262$ & $129956924827$\\
$61$ &  $1082902860136$ & $174870604889$ \\
$62$ &  $1458789177232$ & $231616447104$\\
$63$ &  $1956705210484$ & $309822028517$\\
$64$ &  $2625259647972$ & $408015408928$\\
$65$ &  $3505898738012$ & $544490965352$\\
$66$ &  $4679753246976$ & $718991943424$\\
$67$ &  $6226771093726$ & $953962042995$\\
$68$ &  $8285512851154$ & $1248594579071$\\
$69$ &  $10979998587386$ & $1653369791639$\\
$70$ &  $14541318538948$ & $2170163830076$\\
$71$ &  $19209876952108$ & $2853857859917$\\
$72$ &  $25351409083192$ & $3730699401897$\\
$73$ &  $33363529811282$ & $4899218593439$\\
$74$ &  $43886589872232$ & $6374420377768$\\
$75$ &  $57554118617836$ & $8352091755860$\\
$76$ &  $75434276878574$ & $10852934727707$\\
  \bottomrule
\end{tabular}
\end{table}

\begin{table}[h]
  \caption{Number 
    of positive entries  
    and number of negative entries 
     in the character table of $S_n$ for ${1\leq n\leq 38}$.}\label{PMZ:Table}
  \scriptsize
  \centering
  \begin{tabular}{lll}
    \toprule
    $n$ & pos. & neg. \\
    \midrule
    1 & 1 & 0 \\
    2 & 3 & 1 \\
    3 & 6 & 2 \\
    4 & 14 & 7 \\
    5 & 26 & 13 \\
    6 & 58 & 34 \\
    7 & 98 & 72 \\
    8 & 194 & 137 \\
    9 & 344 & 249 \\
    10 & 652 & 524 \\
    11 & 1165 & 953 \\
    12 & 2020 & 1679 \\
    13 & 3552 & 3106 \\
    14 & 6077 & 5270 \\
    15 & 10362 & 9398 \\
    16 & 17080 & 15666 \\
    17 & 28570 & 26284 \\
    18 & 46836 & 43409 \\
    19 & 77045 & 72861 \\
        \bottomrule
  \end{tabular}\quad
  \begin{tabular}{lll}
    \toprule
    $n$ & pos. & neg. \\
    \midrule
    20 & 122013 & 115940 \\
    21 & 198461 & 189476 \\
    22 & 310602 & 297929 \\
    23 & 494008 & 476904 \\
    24 & 767237 & 743094 \\
    25 & 1205391 & 1174624 \\
    26 & 1828252 & 1782368 \\
    27 & 2846995 & 2787256 \\
    28 & 4277605 & 4196505 \\
    29 & 6520106 & 6413986 \\
    30 & 9795470 & 9645485 \\
    31 & 14738493 & 14553197 \\
    32 & 21750402 & 21483398 \\
    33 & 32582580 & 32243250 \\
    34 & 47614253 & 47165359 \\
    35 & 70213289 & 69606943 \\
    36 & 102477724 & 101689585 \\
    37 & 149340038 & 148321445 \\
    38 & 215267489 & 213892988 \\
    \bottomrule
  \end{tabular}
\end{table}
\begin{table}[h]
  \caption{Number of entries $\equiv 0\pmod d$
    in the character table of $S_n$ for $3\leq d\leq 7$ and $1\leq n\leq 19$.}\label{Other:Table}
  \scriptsize
  \centering
  \begin{tabular}{llllll}
    \toprule
    $n$ & $d=3$ & $d=4$ & $d=5$ & $d=6$ & $d=7$ \\
    \midrule
    1,2 & 0 & 0 & 0 & 0 & 0 \\
    3 & 1 & 1 & 1 & 1 & 1 \\
    4 & 6 & 4 & 4 & 4 & 4 \\
    5 & 11 & 12 & 12 & 11 & 10 \\
    6 & 39 & 30 & 35 & 29 & 29 \\
    7 & 73 & 61 & 64 & 59 & 63 \\
    8 & 181 & 187 & 178 & 163 & 168 \\
    9 & 426 & 368 & 336 & 352 & 339 \\
    10 & 803 & 681 & 726 & 643 & 660 \\
    11 & 1456 & 1272 & 1219 & 1188 & 1147 \\
    12 & 3138 & 2722 & 2668 & 2542 & 2503 \\
    13 & 5289 & 4532 & 4359 & 4135 & 3989 \\
    14 & 9980 & 8443 & 8332 & 8088 & 8031 \\
    15 & 16935 & 14067 & 14173 & 13363 & 13108 \\
    16 & 29669 & 27733 & 25351 & 25171 & 24066 \\
    17 & 49768 & 45156 & 42136 & 42202 & 39316 \\
    18 & 88645 & 77206 & 72601 & 73047 & 68206 \\
    19 & 139983 & 126447 & 115972 & 116635 & 108050 \\
    \bottomrule
  \end{tabular}
\end{table}
\section*{}\vspace{-.36in}

  \vspace{-.3in}
\end{document}